\newtheorem{thm}{Theorem}[section]
\newtheorem{lem}[thm]{Lemma}
\newtheorem*{xthm}{Theorem}
\theoremstyle{definition}
\numberwithin{equation}{section}
\newcommand{\textnum}{\setbox0=\hbox{(1.1)\ }
\leftmargini\wd0
\setlength{\labelsep}{5pt}
\def\labelenumi{\rm(\theenumi)}
\numberwithin{enumi}{section}}
\newcommand{\bbtextnum}{\setbox0=\hbox{$(3.10)_b$\ }
\leftmargini\wd0
\setlength{\labelsep}{5pt}
\def\labelenumi{\rm(\theenumi)}
\numberwithin{enumi}{section}}
\newcommand{\dbtextnum}{\setbox0=\hbox{(2.12)\ }
\leftmargini\wd0
\setlength{\labelsep}{5pt}
\def\labelenumi{\rm(\theenumi)}
\numberwithin{enumi}{section}}
\newcommand{\bnumer}{\textnum\begin{enumerate}\setcounter{enumi}{\arabic{equation}}}
\newcommand{\bbnumer}{\bbtextnum\begin{enumerate}\setcounter{enumi}{\arabic{equation}}}
\newcommand{\dbnumer}{\dbtextnum\begin{enumerate}\setcounter{enumi}{\arabic{equation}}}
\newcommand{\enumer}{\end{enumerate}\setcounter{equation}{\arabic{enumi}}}
\def\<{\langle}
\def\>{\rangle}
\renewcommand{\]}{\mathopen]}
\renewcommand{\[}{\mathclose[}
\newcommand{\symC}{\mathbb C}
\newcommand{\symN}{\mathbb N}
\newcommand{\symR}{\mathbb R}
\newcommand{\calD}{\mathcal D}
\newcommand{\calE}{\mathcal E}
\newcommand{\calN}{\mathcal N}
\newcommand{\calO}{\mathcal O}
\newcommand{\calR}{\mathcal R}
\newcommand{\calS}{\mathcal S}
\DeclareMathOperator{\hRe}{Re}
\DeclareMathOperator{\loc}{loc}
\font\eus=eusm10 at12truept
\newcommand{\eusF}{\hbox{\eus F}}
\DeclareMathOperator{\supp}{supp}
\DeclareMathOperator{\dist}{dist}
\begin{document}

\baselineskip=17pt

\title{Fundamental solutions of evolutionary PDOs
and rapidly decreasing distributions}
\author{Jan Kisy\'nski\\
Institute of Mathematics, Polish Academy of Sciences\\
\'Sniadeckich 8, 00-956 Warszawa, Poland\\
E-mail: jan.kisynski@gmail.com}

\date{}

\maketitle

\renewcommand{\thefootnote}{}

\footnote{2010 \emph{Mathematics Subject Classification}: Primary 35E05, 46F99.}

\footnote{\emph{Key words and phrases}: PDO with constant coefficients, Petrovski\u\i\ condition,
rapidly decreasing distributions, slowly increasing functions.}

\begin{abstract}
Let $P(\partial_0,\partial_1,\ldots,\partial_n)$ be a PDO on $\symR^{1+n}$
with constant coefficients. It is proved that
\begin{itemize}
\item[(i)] the real parts of the $\lambda$-roots of  the polynomial
$P(\lambda,i\xi_1,\ldots,i\xi_n)$  are bounded from above when
$(\xi_1,\ldots,\xi_n)$ ranges over $\symR^n$
\end{itemize}
if and only if
\begin{itemize}
\item[(ii)] $P$ has a fundamental solution with support in
$H_+=\{(x_0,x_1,\allowbreak\ldots, x_n)\in \symR^{1+n}:x_0\ge0\}$ having some special
properties expressed in terms of the L.~Schwartz space $\calO^{\prime}_C$ of rapidly decreasing distributions.
\end{itemize}
Moreover, it is proved that the fundamental solution with support in $H_+$ having these special properties is unique. 
\end{abstract}

\section{Introduction and the main result}\label{sec1}

\subsection{Rapidly decreasing distributions}\label{subsec1.1}
By Theorem IX in Sec.~VII.5 of L.~Schwartz's book \cite{11}, for every distribution $T\in\calD^\prime (\symR^n)$ the following two conditions are equivalent:
\bnumer
\item\label{eq1.1} $T\ast \varphi\in \calS(\symR^n)$ for every $\varphi\in\calD(\symR^n)$,
\item\label{eq1.2} for every $k\in\symN_0$ there is $m_k\in\symN_0$
such that $T=\sum_{|\alpha|\le m_k}\partial^\alpha F_{k,\alpha}$
where, for every multiindex $\alpha=(\alpha_1,\ldots,\alpha_n)\in\symN_0$
of length $|\alpha|=\alpha_1+\cdots+\alpha_n\le m_k$, $F_{k,\alpha}$
is a continuous function on $\symR^n$ such that $\sup_{x\in\symR^n}(1+|x|)^k|F_{k,\alpha}(x)|<\infty$.
\enumer
In the above, and everywhere in the following,
$\partial^\alpha=\partial^{\alpha_1}_1\ldots\partial^{\alpha_n}_n$ where $\partial_1,\ldots,\partial_n$ are partial derivatives of the first order not multiplied by any factor. Each of the conditions \eqref{eq1.1}, \eqref{eq1.2} is satisfied if and only if the distribution $T$ is \emph{rapidly decreasing}, where the definition of rapid decrease, due to L.~Schwartz, refers to the notion of boundedness of a distribution. The space of rapidly decreasing distributions on $\symR^n$ is denoted by $\calO^{\prime}_C(\symR^n)$. 
From \eqref{eq1.2} it follows that
\begin{equation}\label{eq1.3dod}
\mbox{whenever}\quad T\in\calO^{\prime}_C(\symR^n)\ \mbox{and}\ \varphi\in C^{\infty}_{b}(\symR^n),\quad
\mbox{then}\quad\varphi T\in\calO^{\prime}_C(\symR^n).
\end{equation}

It is clear from \eqref{eq1.2} that $\calO^{\prime}_C(\symR^n)\subset
\calS^\prime(\symR^n)$, so that the Fourier transform $\eusF T$
makes sense for every $T\in \calO^{\prime}_C(\symR^n)$. By Theorem~XV in Sec.~VII.8 of \cite{11},
\begin{equation}\label{eq1.3}
\eusF \calO^{\prime}_C(\symR^n)= \calO_M(\symR^n),
\end{equation}
where $\calO_M(\symR^n)$ denotes the space of \emph{infinitely differentiable slowly increasing functions} on $\symR^n$. Recall that $\phi\in \calO_M(\symR^n)$ if and only if $\phi\in C^{ \infty}(\symR^n)$ and for every $\alpha\in\symN^n_0$ 
there is $m_\alpha\in\symN_0$ such that
$$
\sup_{\xi\in\symR^n}(1+|\xi|)^{-m_\alpha}|\partial^\alpha\phi(\xi)|<\infty.
$$

Complete proofs of theorems about $\calO^{\prime}_C(\symR^n)$ and
$\calO_M(\symR^n)$ needed in the present paper may be found in \cite{7}.

\subsection{The main result}\label{subsec1.2} 
Our object of interest  will be the differential operator $P(\partial_0,\partial_1,\ldots,\partial_n)$ on $\symR^{1+n}=\{(x_0,x_1,\ldots,x_n):x_\nu\in\symR$ for $\nu=0,\ldots,n\}$ with constant coefficients, and the associated polynomial $P(\lambda,i\xi_1,\ldots,i\xi_n)$ defined on $\symC\times\symR^n$.
A distribution $N$ on $\symR^{1+n}$ such that
$$
PN\equiv\delta
$$
is called a {\it fundamental solution} for the operator $P$. Let
$$
H_+=\{(x_0,x_1,\ldots, x_n)\in \symR^{1+n}:x_0\ge0\}.
$$
If there exists a fundamental solution $N$ for $P$ such that $\supp N\subset H_+$, then  the operator $P$ is said
to be {\it evolutionary} with respect to~$H_+$.
For every fixed $\lambda\in\symC$ let $e_{-\lambda}$ be the function on $\symR^{1+n}$ given by
$e_{-\lambda}(x_0,x_1,\ldots, x_n)=\exp(-\lambda x_0)$ for
$(x_0,x_1,\ldots, x_n)\in\symR^{1+n}$. For $\vartheta\in \calD(\symR)$, denote by $\vartheta_0$ the function on $\symR^{1+n}$ such that $\vartheta_{0}
(x_0,x_1,\ldots, x_n)=\vartheta(x_0)$. Let
\begin{multline*} \indent
\calO'_{\rm LOC}(H_+)=\{T\in\calD'(\symR^{1+n}):\supp T\subset H_+,\\
\vartheta_0 T\in\calO'_C(\symR^{1+n})\mbox{ for every }\vartheta\in \calD(\symR)\}.\indent
\end{multline*}

\begin{xthm}
Let $P(\partial_0,\partial_1,\ldots,\partial_n)$
be the differential operator on $\symR^{1+n}$ with constant coefficients. Let
\begin{multline*}\indent
\omega_0=\sup\{\hRe\lambda:\lambda\in\symC\text{ and there is }
(\xi_1,\ldots,\xi_n)\in\symR^n\\ 
\text{ such that }P(\lambda,i\xi_1,\ldots,i\xi_n)=0\}.\indent
\end{multline*}
Then the following two conditions are equivalent:
\begin{itemize}
\item[\rm(i)] $\omega_0<\infty$,
\item[\rm(ii)] the differential operator $P(\partial_0,\partial_1,\ldots,\partial_n)$ has a fundamental solution $N$
belonging to $\calO'_{\rm LOC}(H_+)$.
\end{itemize}
Furthermore, if {\rm(i)} and {\rm(ii)} are satisfied, then the fundamental solution $N$ as in {\rm(ii)}
is unique and satisfies
\begin{itemize}
\item[\rm(iii)] $\omega_0=\inf\{\hRe\lambda:\lambda \in \symC,\, e_{-\lambda}N
\in\calO^{\prime}_C(\symR^{1+n})\}$, and $e_{-\lambda}N\in\calO^{\prime}_C(\symR^{1+n})$
 whenever $\hRe\lambda>\omega_0$.
\end{itemize}
\end{xthm}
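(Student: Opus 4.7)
\textbf{Construction $(\text{i})\Rightarrow(\text{ii})$ and forward half of (iii).} Fix $\sigma>\omega_0$ and set $Q_\sigma(\tau,\xi):=1/P(\sigma+i\tau,i\xi_1,\ldots,i\xi_n)$. The key analytic input is a Seidenberg--Tarski / H\"ormander polynomial lower bound $|P(\sigma+i\tau,i\xi)|\ge c(1+|\tau|+|\xi|)^{-N}$ (morally from the factorization $P(\mu,i\xi)=a_m(i\xi)\prod_j(\mu-\mu_j(\xi))$ combined with $\hRe\mu_j(\xi)\le\omega_0<\sigma$), which yields $Q_\sigma\in\calO_M(\symR^{1+n})$. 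By \eqref{eq1.3}, $\eusF^{-1}(Q_\sigma)\in\calO'_C$; define $N:=e_\sigma\eusF^{-1}(Q_\sigma)$. The identity $PN=\delta$ follows from the intertwining $P(\partial)e_\sigma=e_\sigma P(\partial_0+\sigma,\partial_1,\ldots)$ together with $P(\sigma+i\tau,i\xi)Q_\sigma\equiv 1$; independence of $N$ from $\sigma$ and $\supp N\subset H_+$ are Paley--Wiener-in-a-half-plane consequences of the holomorphy of $\mu\mapsto 1/P(\mu,i\xi)$ on $\hRe\mu>\omega_0$ (a Cauchy $\tau$-contour shift implements the change of $\sigma$; letting $\sigma\to+\infty$ against a test function supported in $\{x_0<0\}$ kills the pairing). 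For $\vartheta\in\calD(\symR)$, $\vartheta_0e_\sigma\in C^\infty_b$, so $\vartheta_0N=(\vartheta_0e_\sigma)\eusF^{-1}(Q_\sigma)\in\calO'_C$ by \eqref{eq1.3dod}, i.e.\ $N\in\calO'_{\rm LOC}(H_+)$. For the forward half of (iii), given $\hRe\lambda>\omega_0$ pick $\sigma\in(\omega_0,\hRe\lambda]$: $|e^{(\sigma-\lambda)x_0}|\le 1$ on $H_+\supset\supp N$, so multiplying $e_{-\sigma}N$ by a $C^\infty_b$-extension of $e^{(\sigma-\lambda)x_0}|_{H_+}$ yields $e_{-\lambda}N\in\calO'_C$ via \eqref{eq1.3dod}.

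\textbf{Necessity $(\text{ii})\Rightarrow(\text{i})$.} Suppose $N\in\calO'_{\rm LOC}(H_+)$, $PN=\delta$, and argue by contradiction: assume $\omega_0=\infty$, and pick $(\lambda_n,\xi_n)\in\symC\times\symR^n$ with $P(\lambda_n,i\xi_n)=0$ and $\hRe\lambda_n\to\infty$. For $T>0$ large, pick $\vartheta\in\calD(\symR)$ with $\vartheta\equiv 1$ on $[-1,T]$, $\supp\vartheta\subset[-2,T+1]$, and set $\tilde N:=\eusF_x N$. Testing the fiberwise equation $P(\partial_0,i\xi)\tilde N(\cdot,\xi)=\delta$ against $\vartheta(x_0)e^{-\lambda x_0}$ and integrating by parts --- expand $P(-\partial_0,i\xi)(\vartheta e^{-\lambda\cdot})=[P(\lambda,i\xi)\vartheta+R(x_0,\lambda,\xi)]e^{-\lambda\cdot}$ with $R$ supported in $\{\vartheta'\ne 0\}$ --- one obtains
$$P(\lambda,i\xi)\tilde G_\vartheta(\lambda,\xi)+\tilde H_\vartheta(\lambda,\xi)=\vartheta(0)=1,$$
where $\tilde G_\vartheta(\lambda,\xi)=\int\vartheta(x_0)e^{-\lambda x_0}\tilde N(x_0,\xi)\,dx_0=\eusF(\vartheta_0N)(-i\lambda,\xi)$ and $\tilde H_\vartheta$ is the corresponding $R$-integral. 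The $x_0\in[-2,-1]$ piece of $\tilde H_\vartheta$ vanishes because $\supp\tilde N\subset\{x_0\ge 0\}$, so $\tilde H_\vartheta$ is effectively supported in $x_0\in[T,T+1]$. Evaluating at $(\lambda_n,\xi_n)$ forces $\tilde H_\vartheta(\lambda_n,\xi_n)=1$. However, $|e^{-\lambda_n x_0}|\le e^{-T\hRe\lambda_n}$ on $[T,T+1]$; combined with Paley--Wiener--Schwartz bounds on a $\calO'_C$-cutoff of $N$ around $[T,T+1]$ and Seidenberg--Tarski polynomial estimates $|\xi_n|,|\lambda_n|\le C(1+\hRe\lambda_n)^c$ along the algebraic variety $\{P(\lambda,i\xi)=0\}$, one gets $|\tilde H_\vartheta(\lambda_n,\xi_n)|\le C_T(1+\hRe\lambda_n)^M e^{-T\hRe\lambda_n}\to 0$, contradicting $\tilde H_\vartheta(\lambda_n,\xi_n)=1$. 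Hence $\omega_0<\infty$.

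\textbf{Uniqueness and reverse half of (iii).} With $\omega_0<\infty$ in hand, if $N_1,N_2$ both satisfy (ii), set $M:=N_1-N_2$: then $PM=0$, $\supp M\subset H_+$, and $M\in\calO'_{\rm LOC}(H_+)$. A partition-of-unity plus closed-graph argument (alternatively, fiberwise ODE uniqueness: the supported distributional solutions of $P(\partial_0,i\xi)u=0$ on $\symR$ with $\supp u\subset[0,\infty)$ are trivial) delivers $e_{-\sigma}M\in\calS'$ for $\sigma>\omega_0$; then the translation identity $P(\partial_0+\sigma,\partial_1,\ldots)(e_{-\sigma}M)=e_{-\sigma}PM=0$ Fourier-transforms to $P(\sigma+i\tau,i\xi)\eusF(e_{-\sigma}M)=0$, and dividing by the nowhere-vanishing $P$ yields $M=0$. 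For the reverse half of (iii), if $e_{-\lambda_0}N\in\calO'_C$, the \eqref{eq1.3dod}-cutoff trick (using $|e^{-(\mu-\lambda_0)x_0}|\le 1$ on $H_+$ for $\hRe\mu\ge\hRe\lambda_0$) propagates to $e_{-\mu}N\in\calO'_C$ for all such $\mu$; Fourier transforming gives $1/P(\mu+i\tau,i\xi)\in\calO_M$ on the whole half-plane $\hRe\mu\ge\hRe\lambda_0$, so $P$ is zero-free there and $\omega_0\le\hRe\lambda_0$. \textbf{The main obstacle} is extracting from the qualitative hypothesis ``$\vartheta_0N\in\calO'_C$ for every $\vartheta$'' the quantitative Paley--Wiener--Schwartz control (seminorms uniform as the cutoff slides along $x_0$) needed for the $\tilde H_\vartheta\to 0$ estimate in the Necessity paragraph; a closed-graph argument combined with the structure formula \eqref{eq1.2} and Seidenberg--Tarski root estimates is the expected tool, and once it is set up everything else is a routine unpacking of $\calO'_C$--$\calO_M$ duality together with Paley--Wiener in a half-plane.
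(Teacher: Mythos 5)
Your treatment of $(\mathrm{i})\Rightarrow(\mathrm{ii})$ together with the forward half of (iii), and your proof of the reverse half of (iii), coincide with the paper's (Section~2). The two remaining parts --- uniqueness and $(\mathrm{ii})\Rightarrow(\mathrm{i})$ --- are where you diverge, and both divergences contain genuine gaps.

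\textbf{Uniqueness.} Your argument hinges on establishing $e_{-\sigma}M\in\calS'(\symR^{1+n})$ for $\sigma>\omega_0$, where $M=N_1-N_2\in\calO'_{\rm LOC}(H_+)$ and $PM=0$. This is not a minor technicality to be dispatched by ``partition of unity plus closed graph'': the hypothesis $M\in\calO'_{\rm LOC}(H_+)$ gives \emph{local} control of $\vartheta_0 M$ for each fixed $\vartheta$, but says nothing about how the seminorms of $\vartheta_0 M$ behave as $\supp\vartheta$ slides to $+\infty$, and without such growth control $e_{-\sigma}M$ need not be tempered. Your alternative, ``fiberwise ODE uniqueness'', is false in the generality at hand: writing $P(\partial_0,\partial_1,\ldots,\partial_n)=\sum_{k=0}^m\partial_0^kQ_k(\partial_1,\ldots,\partial_n)$, the leading coefficient $Q_m(i\xi)$ may vanish for some real $\xi$ (the boundary of $H_+$ may be characteristic; the paper stresses this in Section~1.3). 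At such $\xi$ the fiberwise operator degenerates and nontrivial forward-supported null solutions exist. The paper's Section~3 works around exactly this: instead of trying to show $e_{-\sigma}M\in\calS'$, it proves an associativity relation $(N_\sigma*S_\sigma)*(e_{-\sigma}U)=N_\sigma*(S_\sigma*(e_{-\sigma}U))$ via Chevalley's trick of testing against $\varphi_1*\varphi_2*\varphi_3$ and reducing to Fubini--Tonelli (Lemma~3.1), and then reads off the explicit formula $U=(e_\sigma N_\sigma)*F$ (Lemma~3.2). That route never requires temperedness of $e_{-\sigma}M$.

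\textbf{$(\mathrm{ii})\Rightarrow(\mathrm{i})$.} You correctly identify the ``main obstacle'' --- uniformity of the $\calO'_C$-seminorms of $\vartheta_0 N$ as the cutoff horizon $T$ slides to $+\infty$ --- but the closed-graph speculation does not resolve it. Worse, there is a prior gap: the ``Seidenberg--Tarski polynomial estimates $|\xi_n|,|\lambda_n|\le C(1+\hRe\lambda_n)^c$ along the algebraic variety'' is false over the whole zero set (portions with $\hRe\lambda$ bounded and $|\xi|$ unbounded routinely exist, so a fortiori no uniform bound of $|\xi|$ by a power of $\hRe\lambda$ can hold). To extract a sequence with such polynomial control along the top branch of $\sigma(r)=\sup\{\hRe\lambda:|\lambda|^2+|\xi|^2\le r^2/2,\ P(\lambda,i\xi)=0\}$ one has to invoke the Tarski--Seidenberg/Puiseux machinery that the paper uses in Lemma~4.3. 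And once that machinery is in hand one does not need $T\to\infty$ at all: the paper keeps $a,b$ fixed throughout (Lemma~4.1), derives from the $\calO'_C$-structure formula a Chazarain-type logarithmic bound $\hRe\lambda\le a+b\log(1+|\lambda|+|\xi|)$ on the zero set (Lemma~4.2), and then the G\aa{}rding--H\"ormander Puiseux argument (Lemma~4.3 and the Appendix) upgrades the logarithmic bound to the absolute bound $\omega_0<\infty$. This avoids both of your obstacles simultaneously; you should adopt the fixed-horizon Chazarain inequality and invoke the Puiseux lemma rather than attempt to push $T$ to infinity.
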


\subsection{Remarks}\label{subsec1.3} 
Condition (i) can be called the Petrovski\u\i\ condition because it first appeared in I.~G.~Petrovski\u\i's paper \cite{8}. Namely, in \cite{8}, in the footnote on p.~24, it was conjectured that, if  the polynomial
$P(\lambda,i\xi_1,\ldots,i\xi_n)$ 
is unital with respect to $\lambda$, then this condition 
is equivalent to a certain formally weaker condition also concerning the
$\lambda$-roots of $P(\lambda,i\xi_1,\ldots,i\xi_n)$. 
The validity of this conjecture was proved by L.~G{\aa}rding in~\cite{3}. I.~G.~Petrovski\u\i\ noticed the significance of smooth slowly increasing functions for the theory of evolutionary PDEs with constant coefficients. L.~Schwartz explained in \cite{10} how the results of Petrovski\u\i\ may be elucidated by placing them in the framework of rapidly decreasing distributions and smooth slowly increasing functions. (Condition (i) was not mentioned in \cite{10}; notice that \cite{10} was earlier than \cite{3}.)
 
L.~H\"ormander proved in \cite{5} that if $P(\zeta_0,\zeta_1,\ldots,\zeta_n)$ is a polynomial of $1 +n$
complex variables, then the following two conditions are equivalent:
\begin{itemize}
\item[${\rm(i)}^*$] there are constants $A\in\]-\infty,\infty\[$ and $r\in\]0,\infty\[$ 
such that
$$
\inf\{\hRe F(\zeta_1,\ldots,\zeta_n):(\zeta_1,\ldots,\zeta_n)\in B_{i\xi_1,\ldots,i\xi_n;r}\}\le A
$$ 
for every $(\xi_1,\ldots,\xi_n)\in \symR^{n}$ and every function $F$ holomorphic in the ball
$$
B_{i\xi_1,\ldots,i\xi_n;r}=\Big\{(\zeta_1,\ldots,\zeta_n)\in \symC^n:
\sum_{\nu=1}^n|\zeta_{\nu}-i\xi_{\nu}|^2<r^2\Big\}
$$
 such that $P(F(\zeta_1,\ldots,\zeta_n),\zeta_1,\ldots,\zeta_n)=0$ in $B_{i\xi_1,\ldots,i\xi_n;r}$,
\item[${\rm(ii)}^*$] the differential operator $P(\partial_0,\partial_1,\ldots,\partial_n)$ has a fundamental solution with support in~$H_+$.
\end{itemize}
The equivalence ${\rm(i)}^*{\Leftrightarrow}{\rm(ii)}^*$ was reproved in Sec.~12.8 of~\cite{6}. The fundamental solution occurring in ${\rm(ii)}^*$ need not be unique.
It is non-unique if ${\rm(i)}^* $ holds and the boundary of $H_+$ is characteristic for 
$P(\partial_0,\partial_1,\ldots,\partial_n)$. Obviously (i) implies ${\rm(i)}^*$. Furthermore, as indicated in \cite{5}, the operator $\partial_0-i(\partial_1+1)^2$ satisfies ${\rm(i)}^*$ but does not satisfy (i). Therefore condition ${\rm(i)}^*$ is essentially weaker than (i).

Let us stress that in \cite{5}, and in the present paper, the largest power of $\lambda$ in $P(\lambda,i\xi_1,\ldots,i\xi_n)$ is multiplied by a polynomial of
$\xi_1,\ldots,\xi_{n}$ which, in contrast to the assumption (5) in Sec.~3.10 of \cite{9},  may vanish for some $(\xi_1,\ldots,\xi_n)\in\symR^n$.

\section{Existence of a fundamental solution
satisfying (ii) and (iii)}\label{sec2}

\subsection{Application of the Tarski--Seidenberg theorem}\label{subsec2.1} 
We are going to prove that if (i) holds, then the differential operator\break
$P(\partial_0,\partial_1,\ldots,\partial_n)$
has a fundamental solution $N$ satisfying the conditions (ii) and (iii). So, suppose that (i) holds and let
$$
\calN=\{(\sigma,\xi_0,\ldots,\xi_n)\in\symR^{2+n}:P(\sigma+i\xi_0,i\xi_1,\ldots,i\xi_n)=0\}.
$$
Then $\calN\subset\{(\sigma,\xi_0,\ldots,\xi_n)\in\symR^{2+n}:
\sigma\le\omega_0\}$, and hence, by Theorem A.3 from the Appendix to \cite{12} or by Theorem 3.2 of \cite{4}$^{*)}$\footnote{$^{*)}$Following the idea of L.~H\"ormander, these theorems are deduced from the Tarski--Seidenberg theorem about projections of semi-algebraic sets.}, 
there are $c,\mu,\mu'\in\]0,\infty\[$ such that whenever $\sigma\in\]\omega_0,\infty\[$, 
and $(\xi_0,\ldots,\xi_n)\in\symR^{1+n}$, then
\begin{multline}\label{eq2.1}
|P(\sigma+i\xi_0,i\xi_1,\ldots,i\xi_n)|\ge c(\dist((\sigma,\xi_0,\ldots,\xi_n);\calN))^{\mu}\\
\hfill\cdot(1+(\sigma^2+\xi_0^2+\cdots+\xi_n^2)^{1/2})^{-\mu'}\\
\ge c(\sigma-\omega_0)^{\mu}(1+|\sigma+i\xi_0|+(\xi_1^2+\cdots+\xi_n^2)^{1/2})^{-\mu'}.
\end{multline}

\subsection{{\spaceskip.33em plus.22em minus.17em The slowly increasing functions $\widehat{N_{\sigma}}$ and the rapidly decreasing distributions $N_{\sigma}$}}\label{subsec2.2}
For every $\sigma\in\]\omega_0,\infty\[$ let
\begin{equation}\label{eq2.3}
\widehat{N_{\sigma}}(\xi_0,\ldots,\xi_n)=(P(\sigma+i\xi_0,
i\xi_1,\ldots,i\xi_n))^{-1}
\end{equation} 
for $(\xi_0,\ldots,\xi_n)\in\symR^{1+n}$. Then, for every multiindex $\alpha\in\symN^{1+n}$,
$$
\partial^{\alpha}\widehat{N_{\sigma}}(\xi_0,\ldots,\xi_n)
=(P(\sigma+i\xi_0,i\xi_1,\ldots,i\xi_n))^{-1-|\alpha|}Q_{\alpha}(\sigma,\xi_0,\ldots,\xi_n)
$$
where $Q_{\alpha}$ is a polynomial. Consequently, \eqref{eq2.1} implies that
\begin{equation}\label{eq2.4}
\widehat{N_{\sigma}}\in\calO_M(\symR^{1+n})\ \quad\mbox{for every }
\sigma\in\]\omega_0,\infty\[.
\end{equation}
Let
\begin{equation}\label{eq2.5}
N_{\sigma}=\eusF^{-1}\widehat{N_{\sigma}}
\end{equation}
where $\eusF$ denotes the Fourier transformation on $\symR^{1+n}$ 
such that 
\begin{align*}
(\eusF\varphi)(\xi_0,\ldots,\xi_n)&=\widehat\varphi(\xi_0,\ldots,\xi_n)\\
&=\mathop{\int\cdots\int}\limits_{\symR^{1+n}}e^{-i\sum_{\nu=0}^n x_\nu\xi_\nu}
\varphi(x_0,\ldots,x_n)\,d x_0\ldots\, dx_n
\end{align*}
for $\varphi\in\calS(\symR^{1+n})$, and $\eusF$ is extended onto $\calS'(\symR^{1+n})$ by duality.
From \eqref{eq1.3} and \eqref{eq2.4} it follows that
\begin{equation}\label{eq2.6}
N_{\sigma}\in\calO^{\prime}_C(\symR^{1+n}) \ \quad\mbox{for every }
\sigma\in\]\omega_0,\infty\[.
\end{equation}
Furthermore, from \eqref{eq2.3} it follows that
\bnumer
\item\label{eq2.7}
if $\sigma\in\]\omega_0,\infty\[$
then $N_{\sigma}$ is a fundamental solution for the differential operator
$P(\sigma+\partial_0,\partial_1,\ldots,\partial_n)$.
\enumer
Take 
$\sigma\in\]\omega_0,\infty\[$, and consider the distribution $e_{\sigma}N_{\sigma}
\in \calD'(\symR^{1+n})$. By the Parseval equality, for every
$\varphi\in\calD(\symR^{1+n})$ one has
\begin{multline*}
\<e_\sigma N_\sigma,\varphi\>=\<N_\sigma, e_\sigma\varphi\>=(2\pi)^{-1-n}
\<\widehat{N_\sigma}, \widehat{e_\sigma\varphi}^{\vee}\>\\
=(2\pi)^{-1-n}\mathop{\int\!\!\cdots\!\!\int}\limits_{\symR^{1+n}} (\widehat{e_\sigma\varphi}(-\xi_0,\ldots,-\xi_n)(P
(\sigma+i\xi_0,i\xi_1,\ldots,i\xi_n))^{-1}\,d\xi_0\ldots d\xi_n.
\end{multline*}
For every $\varphi\in\calD(\symR^{1+n})$ the Fourier integral
$$
\widehat{\varphi}(\zeta_0,\ldots,\zeta_n)=\mathop{\int\!\!\cdots\!\!\int}\limits_{\symR^{1+n}}
e^{-i\sum^{n}_{\nu=0}x_{\nu}\zeta_{\nu}}\varphi(x_0,\ldots,x_n)\,dx_0\ldots dx_n
$$
makes sense for $(\zeta_0,\ldots,\zeta_n)\in\symC^{1+n}$ and defines the holomorphic extension of $\widehat{\varphi}$ from $\symR^{1+n}$
onto $\symC^{1+n}$. This holomorphic extension satisfies
$$
\widehat{e_\sigma\varphi}(\zeta_0,\ldots,\zeta_n)=\widehat{\varphi}(\zeta_0+i\sigma,
\zeta_1,\ldots,\zeta_n).
$$
Consequently, whenever $\varphi\in\calD(\symR^{1+n})$ and $\sigma\in\]\omega_0,\infty\[$, then
\begin{multline}\label{eq2.8}
\<e_\sigma N_\sigma,\varphi\>=(2\pi)^{-1-n}\mathop{\int\!\!\cdots\!\!\int}\limits_{\symR^{1+n}}\widehat\varphi
(-\xi_0+i\sigma,-\xi_1,\ldots,-\xi_n)\\
\cdot P(\sigma+i\xi_0,\xi_1,\ldots,\xi_n)^{-1}\,d\xi_0\ldots d\xi_n.
\end {multline}
Integration by parts shows that whenever $\varphi\in\calD(\symR^{1+n})$ and $l\in \symN$, then
\begin{multline}\label{eq2.9}
(1+|\xi_0-i\sigma|^l+|\xi_1|^l+\cdots+|\xi_n|^l)|\widehat{\varphi}
(-\xi_0+i\sigma,-\xi_1,\ldots,-\xi_n)|\\
\le\Big(\|\varphi\|_{L^{1}(\symR^{1+n})}+\sum^{n}_{\nu=0}\|\partial^l_\nu\varphi\|_{L^{1}(\symR^{1+n})}\Big)
\exp(H_{\varphi}(\sigma))
\end {multline} 
for every $\sigma,\xi_0,\ldots,\xi_n\in\symR$ where
\begin{equation}\label{eq2.10}
H_\varphi(\sigma)=\sup\{\sigma x_0:(x_0,\ldots,x_n)\in\supp\varphi\}.
\end{equation}
From \eqref{eq2.1}, \eqref{eq2.8}--\eqref{eq2.10} and the Cauchy integral theorem it follows that
\bbnumer
\item\label{eq2.12}
the distribution $e_\sigma N_\sigma\in\calD'(\symR^{1+n})$ does not depend on $\sigma$ provided that $\sigma\in\]\omega_0,\infty\[$,
\item\label{eq2.13}
$\lim_{\sigma\to\infty}\<e_\sigma N_\sigma,\varphi\>\!=\!0$ whenever
$\varphi\in\calD(\symR^{1+n})$ and $\supp \varphi\!\subset\!\symR^{1+n}\setminus\nobreak H_+$.
\enumer

\subsection{The fundamental solution $N$}\label{subsec2.3} Thanks to \eqref{eq2.12} we may define the distribution $N\in\calD'(\symR^{1+n})$ by the equality
\begin{equation}\label{eq2.14}
N=e_\sigma N_\sigma\ \quad\mbox{for every }\sigma\in\]\omega_0,\infty\[.
\end{equation}
From \eqref{eq2.13} it follows that
\begin{equation}\label{eq2.15}
\supp N\subset H_+.
\end{equation}
 For every $\sigma\in\symR$ let
\begin{equation}\label{eq2.16}
S_\sigma=P(\sigma+\partial_0,\partial_1,\ldots,\partial_n)\delta.
\end{equation}


Since $(-\partial_{0})^{k}(e_{-\sigma}\varphi)=e_{-\sigma}(\sigma-\partial_{0})^{k}\varphi$,
it follows that
\begin{equation}\label{eqnow2.15}
P(-\partial_0,-\partial_1,\ldots,-\partial_n)(e_{-\sigma}\varphi)=e_{-\sigma}
P(\sigma-\partial_0,-\partial_1,\ldots,-\partial_n)\varphi
\end{equation}
for every $\sigma\in\symR$ and $\varphi\in \calD(\symR^{1+n})$.
From \eqref{eqnow2.15} one infers that 
\begin{align*}
 \<S_0, e_{-\sigma}\varphi\>
&=[P(-\partial_0,-\partial_1 ,\ldots,-\partial_n)
(e_{-\sigma}\varphi)](0)\\
&=[e_{\sigma}P(-\partial_0,-\partial_1 ,\ldots,-\partial_n)
(e_{-\sigma}\varphi)](0)\\
&=[P(\sigma-\partial_0,-\partial_1 ,\ldots,-\partial_n)\varphi](0)
=\<S_{\sigma},\varphi\>,
\end{align*}
proving that
\begin{equation}\label{eqnow2.16}
S_\sigma=e_{-\sigma} S_0\ \quad\mbox{for every }\sigma\in \symR.
\end{equation}
From \eqref{eq2.7}, \eqref{eq2.14} and \eqref{eqnow2.15} it follows that whenever $\sigma
\in\]\omega_0,\infty\[$, then
\begin{equation*}
PN=S_0\ast N=(e_{\sigma}S_\sigma)\ast(e_{\sigma}N_\sigma)
=e_{\sigma}(S_\sigma\ast N_\sigma)=e_{\sigma}\delta=\delta,
\end{equation*}
so that
\begin{equation}\label{eq2.18}
\mbox{$N$ is a fundamental solution for the operator $P$.}
\end{equation}
Above we have used the fact that whenever $T,U\in\calD'(\symR^{1+n})$,
$\sigma\in\symR$, and one of $T,U$ has compact support, then $e_{\sigma} (T*U)
=(e_{\sigma}T)*(e_{\sigma}U)$. This is true under the additional assumption that
$T,U\in L^1_{\loc}(\symR^{1+n})$, and this case implies the general assertion by regularization.

\subsection{Properties of $N$}\label{subsec2.4}
If $\vartheta\in\calD(\symR)$ and $\sigma\in\]\omega_0,\infty\[$, then $\vartheta_0 e_{\sigma}$ is bounded on $\symR^{1+n}$ together with all its partial derivatives, so that, by \eqref{eq1.3dod},
$\vartheta_0 N=(\vartheta_0 e_{\sigma})N_\sigma\in\calO^{\prime}_C(\symR^{1+n})$ because $N_\sigma\in\calO^{\prime}_C(\symR^{1+n})$.
Hence, by \eqref {eq2.15},
\begin{equation}\label{eqnow2.18}
N\in\calO'_{\rm LOC}(H_+).
\end{equation}
The relations \eqref{eq2.18}  and \eqref{eqnow2.18} show that (i) implies (ii). We are going to prove that
 $N$  defined by \eqref{eq2.14} satisfies (iii). To this end, take $\lambda\in\symC$  such that $\hRe \lambda\in\]\omega_0,\infty\[$.  Let 
$\sigma=\frac12(\omega_0+\hRe\lambda)$. Then
$e_{-\lambda}N=e_{\sigma-\lambda} N_\sigma\in\calO^{\prime}_C(\symR^{1+n})$ because $N_{\sigma}\in\calO^{\prime}_C(\symR^{1+n})$, $\supp N_\sigma\subset H_+$, 
and $e_{\sigma-\lambda}$ is bounded together with all its partial derivatives on the set
$\{(x_0,\ldots,x_n)\in\symR^{1+n}:x_0>-1\}$. 
It remains to prove that
\begin{equation}\label{eq2.20}
\mbox{if $\lambda\in\symC$ and $e_{-\lambda}N\in\calO^{\prime}_C(\symR^{1+n})$, then $\hRe\lambda\ge\omega_0$.}
\end{equation}
So, suppose that $\lambda\in\symC$ and $e_{-\lambda}N\in\calO^{\prime}_C(\symR^{1+n})$. Take any $\sigma\in\]\hRe\lambda,\infty\[$. Since 
$e_{\lambda-\sigma}$ is bounded on $\{(x_0,\ldots,x_n)\in\symR^{1+n}:
x_0>-1\}$ together with all its partial derivatives, it follows by \eqref{eq1.3dod} that $e_{-\sigma}N
=e_{\lambda-\sigma}(e_{-\lambda}N)\in\calO^{\prime}_C(\symR^{1+n})$. Furthermore
$$
S_\sigma*(e_{-\sigma}N)=(e_{-\sigma}S_0)*(e_{-\sigma}N)=e_{-\sigma}(S_0*N)
=e_{-\sigma}\delta=\delta. 
$$
Let $\phi=\eusF(e_{-\sigma}N)$. Then $\phi\in\calO_M(\symR^{1+n})$ and
\begin{multline*} 
P(\sigma+i\xi_0,i\xi_1,\ldots,i\xi_n)
\cdot\phi(\xi_0,\ldots,\xi_n)
=[\eusF(S_\sigma*(e_{-\sigma}N))](\xi_0,\ldots,\xi_n)=1
\end{multline*} 
for every $(\xi_0,\ldots,\xi_n)\in\symR^{1+n}$. This implies that
$P(\sigma+i\xi_0,i\xi_1,\ldots,i\xi_n)\ne0$
for every
$(\xi_0,\ldots,\xi_n)\in\symR^{1+n}$. Since this is true for every $\sigma\in\]\hRe\lambda,\infty\[$, it follows that $\hRe\lambda\ge\omega_0$, proving \eqref{eq2.20}.

\section{\spaceskip.33em plus.22em minus.17em Uniqueness of the fundamental solution
belonging to $\calO'_{\rm LOC}(H_+)$}\label{sec3}

\subsection{An associativity relation for convolution}\label{subsec3.1}

\begin{lem}\label{lem3.1} Suppose that {\rm(i)} holds. Fix
$\sigma\in\]\omega_0,\infty\[$ and define  $N_\sigma$ and $S_\sigma$ by 
\eqref{eq2.5} and \eqref{eq2.16}. Suppose moreover that
$U\in\calO'_{\rm LOC}(H_+)$ and that $P(\partial_0,\partial_1,\ldots,\partial_n)U$ has compact support. Then
\begin{equation}\label{eqnow3.1}
(N_\sigma*S_\sigma)*(e_{-\sigma}U)=N_\sigma*(S_\sigma*(e_{-\sigma}U)).
\end{equation}		
\end{lem}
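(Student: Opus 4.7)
Both sides of \eqref{eqnow3.1} are well-defined. $S_\sigma=P(\sigma+\partial_0,\ldots,\partial_n)\delta\in\calE'(\symR^{1+n})$, and by a direct computation analogous to \eqref{eqnow2.15}, $S_\sigma*(e_{-\sigma}U)=P(\sigma+\partial_0,\ldots,\partial_n)(e_{-\sigma}U)=e_{-\sigma}PU$, which lies in $\calE'$ because $PU$ has compact support by hypothesis. Hence $N_\sigma*(S_\sigma*(e_{-\sigma}U))=N_\sigma*(e_{-\sigma}PU)$ is well-defined, and combined with $N_\sigma*S_\sigma=\delta$ from \eqref{eq2.7}, the left side $(N_\sigma*S_\sigma)*(e_{-\sigma}U)$ equals $e_{-\sigma}U$. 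The lemma thus reduces to
\[
e_{-\sigma}U=N_\sigma*(e_{-\sigma}PU). \qquad (*)
\]

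My plan is to prove $(*)$ on the Fourier side. The preparatory step is to establish that $e_{-\sigma}U\in\calS'(\symR^{1+n})$. Take a smooth partition of unity $\{\zeta_k\}_{k\ge0}\subset\calD(\symR)$ with $\supp\zeta_k\subset[k-1,k+1]$ and $\sum_k\zeta_k=1$ on $[-1,\infty)$, and decompose $e_{-\sigma}U=\sum_k e_{-\sigma}(\zeta_k)_0U$. Each summand is in $\calO'_C(\symR^{1+n})$ by the definition of $\calO'_{\rm LOC}(H_+)$ combined with \eqref{eq1.3dod}, since $e_{-\sigma}(\zeta_k)_0$ is bounded together with all its derivatives. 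The hypothesis that $PU$ has compact support forces $U$ to satisfy the homogeneous equation at large $x_0$, and the Petrovski\u\i\ bound $\sigma>\omega_0$ yields exponential smallness of the $\calS'$-seminorms of $e_{-\sigma}(\zeta_k)_0U$ as $k\to\infty$, giving convergence of the series in $\calS'$.

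With $e_{-\sigma}U\in\calS'$, apply $\eusF$ to the identity $S_\sigma*(e_{-\sigma}U)=e_{-\sigma}PU$ to obtain
\[
P(\sigma+i\xi_0,i\xi_1,\ldots,i\xi_n)\,\widehat{e_{-\sigma}U}=\widehat{e_{-\sigma}PU}
\]
in $\calS'$. The polynomial on the left is nowhere zero on $\symR^{1+n}$ by \eqref{eq2.1}, and its reciprocal is $\widehat{N_\sigma}\in\calO_M$ by \eqref{eq2.3}. Multiplying both sides by $\widehat{N_\sigma}$ (permitted because $\calO_M\cdot\calS'\subset\calS'$) and using the convolution theorem $\eusF(N_\sigma*T)=\widehat{N_\sigma}\widehat T$ for $T\in\calE'$, one deduces $\widehat{e_{-\sigma}U}=\eusF(N_\sigma*(e_{-\sigma}PU))$; injectivity of $\eusF$ on $\calS'$ then yields $(*)$.

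The main obstacle is rigorously establishing $e_{-\sigma}U\in\calS'$. For generic $U\in\calO'_{\rm LOC}(H_+)$ the distribution $e_{-\sigma}U$ need not be tempered, so one must combine the local $\calO'_C$-structure of $U$, the compact support of $PU$, and the Petrovski\u\i\ bound $\sigma>\omega_0$ quantitatively, for instance through the representation \eqref{eq1.2} of each $(\zeta_k)_0U\in\calO'_C$, in order to transfer a growth estimate on solutions of $PV=0$ to the distributional setting. Once this tempered-distribution assertion is in place, the Fourier argument is essentially automatic.
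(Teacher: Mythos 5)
Your reduction is correct: $S_\sigma*(e_{-\sigma}U)=e_{-\sigma}PU\in\calE'$, $N_\sigma*S_\sigma=\delta$, and so the claim reduces to $e_{-\sigma}U=N_\sigma*(e_{-\sigma}PU)$. This matches the opening of the paper's own proof. What follows, however, hinges on the assertion that $e_{-\sigma}U\in\calS'(\symR^{1+n})$, and the justification you sketch for it is circular. The definition of $\calO'_{\rm LOC}(H_+)$ controls $(\zeta_k)_0U$ in the $\calO'_C$-sense for each fixed $k$, but says nothing about how the corresponding $\calO'_C$-seminorms grow with $k$; for $\sum_k e_{-\sigma}(\zeta_k)_0U$ to converge in $\calS'$ you would need these seminorms to be $O(e^{\sigma' k})$ for some $\sigma'<\sigma$. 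You propose to extract this from \textquotedblleft$PU$ is compactly supported\textquotedblright\ plus the Petrovski\u\i\ bound $\sigma>\omega_0$, but an estimate of the form \textquotedblleft any solution of $PV=0$ in $\calO'_{\rm LOC}(H_+)$ grows at most like $e^{\omega_0 x_0}$\textquotedblright\ is, in substance, the uniqueness statement being proved: the only known way to pass from the Petrovski\u\i\ bound on the symbol to a growth bound on an \emph{arbitrary} element of $\calO'_{\rm LOC}(H_+)$ solving the equation is to identify that element with $N*F=e_\sigma N_\sigma*F$, which is exactly what Lemma~\ref{lem3.2} (whose proof invokes the present lemma) establishes. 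You acknowledge the obstacle but stop there, so the proposal as written has a genuine gap at its central step.

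The paper avoids the issue entirely. It proves the associativity by Chevalley's device: test both sides of \eqref{eqnow3.1} against $\varphi_1*\varphi_2*\varphi_3$, put $f=N_\sigma*\varphi_1\in\calS(\symR^{1+n})$, $g=S_\sigma*\varphi_2\in\calD(\symR^{1+n})$, $h=(e_{-\sigma}U)*\varphi_3$, and reduce to $(f*g)*h=f*(g*h)$, which is then settled by Fubini--Tonelli. The decisive point is that this needs only \emph{local} information about $h$: applying the defining property of $\calO'_{\rm LOC}(H_+)$ with a single cut-off $\vartheta$ at a time gives $h\in C(\symR;L^1(\symR^n))$, and together with the half-space support condition $\supp f,\supp g,\supp h\subset\{x_0\ge-c\}$ the triple integral converges for each fixed $x$. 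No global decay of $e_{-\sigma}U$, and in particular no temperedness, is ever used. If you want to keep the Fourier-side argument you must first prove $e_{-\sigma}U\in\calS'$ by some genuinely independent route, and none is supplied.
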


\begin{proof}  Notice that both sides of \eqref{eqnow3.1} are well defined because every sign $*$ in \eqref{eqnow3.1} denotes a convolution of two distributions on $\symR^{1+n}$ one of which has compact support. To see this it is sufficient to observe that 
$\supp S_\sigma=\{0\}$, $N_\sigma*S_\sigma=S_\sigma*N_\sigma=\delta$, and, by \eqref{eqnow2.15},
$$
S_\sigma*(e_{-\sigma}U)=P(\sigma+\partial_0,\partial_1,\ldots,\partial_n)(e_{-\sigma}U)
=e_{-\sigma}(P(\partial_0,\partial_1,\ldots,\partial_n)U)
$$
has compact support. However, from the three factors $N_\sigma$, $S_\sigma$ and
$e_{-\sigma}U$ occurring in \eqref{eqnow3.1} only one has compact support, so that \eqref{eqnow3.1} does not follow from any of the simple criterions of the associativity of convolution. In order to prove that both sides of \eqref{eqnow3.1} are equal we will apply an argument going back to C.~Chevalley (\cite[pp.~120--121]{2}, proof of Theorem 2.2) which reduces the problem to the Fubini--Tonelli theorem.

Since the set $\{\varphi_1*\varphi_2*\varphi_3:\varphi_i\in\calD(\symR^{1+n})$ for
$i=1,2,3\}$ is dense in $\calD(\symR^{1+n})$, \eqref{eqnow3.4} will follow once it is proved that
\begin{equation}\label{eqnow3.2}
[(N_\sigma*S_\sigma)*(e_{-\sigma} N)]*[\varphi_1*\varphi_2*\varphi_3]=
[N_\sigma*(S_\sigma*(e_{-\sigma} N))]*[\varphi_1*\varphi_2*\varphi_3]
\end{equation}	    
for every $\varphi_1,\varphi_2,\varphi_3\in\calD(\symR^{1+n})$. 
In order to  prove  \eqref{eqnow3.2}, fix $\varphi_1$, $\varphi_2$, $\varphi_3$ and let
$$
f=N_\sigma*\varphi_1,\quad g= S_\sigma*\varphi_2,\quad h=(e_{-\sigma}U)*\varphi_3.
$$
Then $f,g,h\in C^\infty(\symR^{1+n})$ and using commutativity and associativity of convolution of distributions when all factors except at most one have compact support, one can prove that
\begin{equation}\label{eqnow3.3}
[(N_\sigma*S_\sigma)*(e_{-\sigma} U)]*[\varphi_1*\varphi_2*\varphi_3]=(f*g)*h
\end{equation}		   
and
\begin{equation}\label{eqnow3.4}
[N_\sigma*(S_\sigma*(e_{-\sigma} U))]*[\varphi_1*\varphi_2*\varphi_3]=f*(g*h).
\end{equation}
Let us stress that in the proof of \eqref{eqnow3.3} and \eqref{eqnow3.4} (and in particular in the proof that the right sides of \eqref{eqnow3.3} and \eqref{eqnow3.4} make sense) we have to make use of the facts that $N_\sigma*S_\sigma=\delta$  and $S_\sigma*(e_{-\sigma} U)$
has compact support. The equalities \eqref{eqnow3.3} and \eqref{eqnow3.4} reduce the problem of proving \eqref{eqnow3.2} to proving the equality
\begin{equation}\label{eqnow3.5}
(f*g)*h=f*(g*h).
\end{equation}
To do this, we need some more detailed information about $f$, $g$,~$h$. Since 
$N_\sigma\in\calO'_C(\symR^{1+n})$, by \eqref{eq1.1} one has
\begin{equation}\label{eqnow3.6}
f\in\calS(\symR^{1+n})\subset L^{1}(\symR^{1+n}).
\end{equation}
Since $\supp S_\sigma=\{0\}$, one has
\begin{equation}\label{eqnow3.7}
g\in\calD(\symR^{1+n})\subset L^{1}(\symR^{1+n}).
\end{equation}
Furthermore
\begin{equation}\label{eqnow3.8}
h\in C^\infty(\symR;\calS(\symR^n))\subset C(\symR; L^{1}(\symR^{n})).
\end{equation}

Indeed, for the proof of \eqref{eqnow3.8} it is sufficient to show that 
$[(e_{-\sigma} U)*\varphi_3]|_{[-a,a]\times\symR^n}\in C^\infty([-a,a];\calS(\symR^n))$
for every $a\in\]0,\infty\[$. So, take $a\in\]0,\infty\[$ and
$b\in\]0,\infty\[$
such that $\supp\varphi_3\subset[-b,b]\times\symR^n$. Take $\vartheta\in\calD(\symR)$
such that $\vartheta=1$  on $[-a-b,a+b]$. Then
\begin{equation}\label{eqnow3.9}
[(e_{-\sigma} U)*\varphi_3]|_{[-a,a]\times\symR^n}=[(\vartheta_{0} e_{-\sigma} U)*\varphi_3]|_{[-a,a]\times\symR^n}.
\end{equation}
Since $\vartheta_{0} e_{-\sigma} U\in\calO'_C(\symR^{1+n})$, by \eqref{eq1.1} one has 
$(\vartheta e_{-\sigma} U)*\varphi_3\in \calS(\symR^{1+n})$, so that \eqref{eqnow3.9} implies \eqref{eqnow3.8}.

Since $\supp N_\sigma, \supp e_{-\sigma} U\subset H_+$ there is
$c\in\]0,\infty\[$ (depending on $\varphi_1$, $\varphi_2$, $\varphi_3$, which however are fixed) such that
\begin{equation}\label{eqnow3.10}
\supp f,\supp g,\supp h\subset\{(x_0,x_1,\ldots,x_n)\in \symR^{1+n}:x_0\ge -c\}.
\end{equation}
From \eqref{eqnow3.6}--\eqref{eqnow3.8} and \eqref{eqnow3.10} it follows that
$(|f|*|g|)*|h|\in C([-3c,\infty\[$; $L^{1}(\symR^n))$. Hence
\begin{multline*}
\int_{\symR^{1+n}}\bigg(\int_{\symR^{1+n}}|f(v_0,\ldots,v_n)|\,|g(u_0-v_0,\ldots,u_n-v_n)|
\, dv_{0}\ldots dv_n\bigg)\\
\cdot |h(x_0-u_0,\ldots,x_n-u_n)|\,
du_0\ldots du_n<\infty
\end{multline*}
for every $(x_0,\ldots,x_n)\in \symR^{1+n}$, so that, by the Fubini--Tonelli theorem, the two iterated integrals corresponding to the integral
\begin{multline*}
\int_{\symR^{1+n}\times \symR^{1+n}} f(v_0,\ldots,v_n) g(u_0-v_0,\ldots,u_n-v_n)
h(x_0-u_0,\ldots,x_n-v_n)\\
 dv_{0}\ldots dv_{n}\,du_{0}\ldots du_{n}
\end{multline*}
are equal for every $(x_0,\ldots,x_n)\in \symR^{1+n}$. This means that \eqref{eqnow3.5} holds.
\end{proof}

\subsection{Uniqueness as a consequence of the associativity relation (\ref{eqnow3.1})}\label{subsec3.2}
The uniqueness of the fundamental solution belonging to $\calO'_{\rm LOC}(H_+)$ for the operator  $P(\partial_0,\partial_1,\ldots,\partial_n)$
satysfying (i) is a consequence of the following lemma.

\begin{lem}\label{lem3.2}  Suppose that {\rm(i)} holds and that $F\in\calE'(\symR^{1+n})$ has support contained in $H_+$. Then the equation
\begin{equation}\label{eqnow3.11}
P(\partial_0,\partial_1,\ldots,\partial_n)U=F
\end{equation}			    
has exactly one solution $U$ belonging to $\calO'_{\rm LOC}(H_+)$. Moreover, for this
solution and every $\sigma\in \]\omega_0,\infty\[$  one has
\begin{equation}\label{eqnow3.12}
U=(e_\sigma N_\sigma)*F.
\end{equation}
\end{lem}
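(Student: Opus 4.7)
The plan is to exhibit $U=(e_\sigma N_\sigma)*F$ as the unique candidate; this simultaneously delivers the formula \eqref{eqnow3.12}, and its $\sigma$-independence (for $\sigma\in\,]\omega_0,\infty[\,$) is automatic from \eqref{eq2.14}, which identifies $N=e_\sigma N_\sigma$ with the fundamental solution of \S\ref{subsec2.3}. Existence amounts to checking that this $U$ lies in $\calO'_{\rm LOC}(H_+)$ and satisfies \eqref{eqnow3.11}; uniqueness will be read off directly from Lemma~\ref{lem3.1}.

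For existence, $N*F$ is well defined because $F$ has compact support. Applying $P$ and using $PN=\delta$ from \eqref{eq2.18} gives $PU=\delta*F=F$, and $\supp U\subset \supp N+\supp F\subset H_+$. The remaining task is to verify that $\vartheta_0 U\in\calO'_C(\symR^{1+n})$ for every $\vartheta\in\calD(\symR)$. I would fix such $\vartheta$ with $\supp\vartheta\subset[-a,a]$, choose $b>0$ with $\supp F\subset[-b,b]\times\symR^n$, and then take $\tilde\vartheta\in\calD(\symR)$ equal to $1$ on $[-a-b,a+b]$. A short support computation, combining $\supp N\subset H_+$ with the above, shows that $\vartheta_0(N*F)=\vartheta_0((\tilde\vartheta_0 N)*F)$. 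By the definition of $\calO'_{\rm LOC}(H_+)$ we have $\tilde\vartheta_0 N\in\calO'_C(\symR^{1+n})$, and for every $\varphi\in\calD(\symR^{1+n})$,
\begin{equation*}
((\tilde\vartheta_0 N)*F)*\varphi=(\tilde\vartheta_0 N)*(F*\varphi)\in\calS(\symR^{1+n})
\end{equation*}
because $F*\varphi\in\calD(\symR^{1+n})$ and $\tilde\vartheta_0 N$ satisfies \eqref{eq1.1}. Hence $(\tilde\vartheta_0 N)*F\in\calO'_C(\symR^{1+n})$, and \eqref{eq1.3dod} gives $\vartheta_0 U\in\calO'_C(\symR^{1+n})$.

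For uniqueness, let $U\in\calO'_{\rm LOC}(H_+)$ satisfy $PU=F$. Since $PU$ is compactly supported, Lemma~\ref{lem3.1} applies and yields
\begin{equation*}
(N_\sigma*S_\sigma)*(e_{-\sigma}U)=N_\sigma*(S_\sigma*(e_{-\sigma}U)).
\end{equation*}
On the left, $N_\sigma*S_\sigma=\delta$ by \eqref{eq2.7}, so the left side is $e_{-\sigma}U$. On the right, \eqref{eqnow2.15} gives $S_\sigma*(e_{-\sigma}U)=e_{-\sigma}(PU)=e_{-\sigma}F$, so the right side is $N_\sigma*(e_{-\sigma}F)$. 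Multiplying by $e_\sigma$ and using the identity $e_\sigma(T*V)=(e_\sigma T)*(e_\sigma V)$ valid whenever one of $T,V$ has compact support (as recorded at the end of \S\ref{subsec2.3}), we obtain $U=(e_\sigma N_\sigma)*F$, which is \eqref{eqnow3.12}.

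The main obstacle is the verification that $\vartheta_0(N*F)\in\calO'_C(\symR^{1+n})$: this is the step where the ``local'' structure of $\calO'_{\rm LOC}(H_+)$ must be exploited, combining the $H_+$-support of $N$ with the compactness of $\supp F$ via an appropriate cutoff $\tilde\vartheta$. Everything else reduces, without further estimates, to the associativity identity of Lemma~\ref{lem3.1} together with previously recorded identities.
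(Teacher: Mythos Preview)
Your proof is correct and follows essentially the same route as the paper: existence via $U=N*F$ with $N=e_\sigma N_\sigma$, and uniqueness by invoking Lemma~\ref{lem3.1} together with $N_\sigma*S_\sigma=\delta$ and $S_\sigma*(e_{-\sigma}U)=e_{-\sigma}F$, exactly as the paper does. The only difference is that where the paper simply asserts ``it follows that $U$ defined by \eqref{eqnow3.12} belongs to $\calO'_{\rm LOC}(H_+)$'', you supply the explicit cutoff argument with $\tilde\vartheta$ and the characterization \eqref{eq1.1}; this fills in a detail the paper leaves to the reader but does not change the approach.
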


\begin{proof}  Suppose that (i) holds. Take $\sigma\in\]\omega_0,\infty\[$. Then, in
view of (2.18) and \eqref{eq2.18}, $N=e_\sigma N_\sigma$ belongs to
$\calO'_{\rm LOC}(H_+)$ and is a fundamental solution for $P(\partial_0,\partial_1,\ldots,\partial_n)$.
It follows that  $U$ defined by \eqref{eqnow3.12} belongs to
$\calO'_{\rm LOC}(H_+)$ and satisfies \eqref{eqnow3.11}. It remains to prove that in
$\calO'_{\rm LOC}(H_+)$  there are no other solutions of \eqref{eqnow3.11}. To this end suppose that $U\in\calO'_{\rm LOC}(H_+)$ and $U$ satisfies \eqref{eqnow3.11}. Take
$\sigma\in\]\omega_0,\infty\[$ and define $S_\sigma$  by \eqref{eq2.16}. Then, by \eqref{eqnow2.15}, 
$$
S_\sigma*(e_{-\sigma} U)\!=\!P(\sigma+\partial_0,\partial_1,\ldots,\partial_n)(e_{-\sigma} U)
\!=\!e_{-\sigma}(P(\partial_0,\partial_1,\ldots,\partial_n)U)
\!=\!e_{-\sigma} F,
$$
whence, by \eqref{eq2.16}, \eqref{eq2.7} and \eqref{eqnow3.1},
\begin{align*}            
e_{-\sigma} U&=\delta*(e_{-\sigma} U)=(N_\sigma* S_\sigma)*(e_{-\sigma} U)
=N_\sigma*(S_\sigma*(e_{-\sigma} U))\\
&=N_\sigma*(e_{-\sigma} F)\!=\!e_{-\sigma}((e_{\sigma}N_\sigma)*F)
\end {align*}                                                
so that $U=(e_{\sigma}N_\sigma)*F$.
\end{proof}

\section{Proof of (ii)$\Rightarrow$(i)}\label{sec4}

\subsection{The distributions $\vartheta_0 N(\varphi\otimes{\cdot})$}\label{subsec4.1}
Let $N\in\calO'_{\rm LOC}(H_+)$ be a fundamental solution for $P(\partial_0,\partial_1,\ldots,\partial_n)$. 
Fix $a,b$ such that $0<a<b<\infty$, and $\vartheta\in\calD(\symR)$
such that $\vartheta=1$ on $[-b,b]$. For every $\varphi\in\calD(\symR)$ consider the mapping 
$$ 
T(\varphi):\calD(\symR^n)\ni\phi\mapsto\<\vartheta_0N,\varphi\otimes\phi\>\in \symC.
$$
Then $T(\varphi)\in\calD'(\symR^n)$.
Since 
$\vartheta_0N\in\calO^{\prime}_C
(\symR^{1+n})$, from \eqref{eq1.2} it follows that for every
$k\in\symN_0$ there is $m_k\in \symN_0$ such that
\begin{equation}\label{eq4.1}
\vartheta_0N=\sum_{p+|\alpha|\le m_k}\partial^p_0 \partial^{\alpha_1}_1\cdots \partial^{\alpha_n}_n F_{k;p,\alpha}
\end{equation}
where every $F_{k;p,\alpha}$ is a continuous function on
$\symR^{1+n}=\{(t,x)\in\symR\times\symR^{n}\}$
for which
$$
\sup_{(t,x)\in\symR^{1+n}}(1+|t|+|x|)^k|F_{k;p,\alpha}(t,x)|<\infty.
$$
Consequently, whenever $\varphi\in\calD(\symR)$, then
\begin{equation}\label{eq4.2}
T(\varphi)=\sum_{|\alpha|\le m_k}\partial_1^{\alpha_1}\cdots \partial_n^{\alpha_n}
f_{k;\alpha;\varphi}
\end{equation}
where
$$
f_{k;\alpha;\varphi}(x)=\sum_{p\le m_k-|\alpha|} \int_{\symR}((-\partial_0)^p\varphi(t)) F_{k;p;\alpha}(t,x)\,dt.
$$
It follows that, whenever $|\alpha|\le m_k$, $\varphi\in\calD(\symR)$, and 
$x\in\symR^{n}$, then
\begin{align}\label{eq4.3}
|f_{k;\alpha;\varphi}(x)|
&\le C_k \sum_{p\le m_k-|\alpha|} \int_{\supp\vartheta}
|\partial_0^p\varphi(t)|(1+|t|+|x|)^{-k}\, dt\\
&\le D_k(1+|x|)^{-k}\sup\{|\partial_0^p\varphi(t)|:p=0,\ldots,m_{k},\,t\in\symR\},\notag
\end{align}
where $C_k,D_k\in\]0,\infty\[$ depend only on $k$. In particular this shows that
\begin{equation}\label{eq4.4}
T(\varphi)\in\calO^{\prime}_C(\symR^{n})
\ \quad\mbox{for every }\varphi\in\calD(\symR).^{*)}
\end{equation}\footnote{$^{*)}$ After introducing the topology in $\calO^{\prime}_C(\symR^{n})$, it is possible to prove that the mapping
$\calD(\symR)\ni\varphi\mapsto T(\varphi)\in\calO^{\prime}_C(\symR^{n})$ is a vector-valued distribution. However this is insignificant for the present proof.}Since $N$ is the fundamental solution for $P(\partial_0,\partial_1,\ldots,\partial_n)$ with support in $H_+$, and $\vartheta=1$ on $[-b,b]$, it follows that
\bnumer
\item\label{eq4.5}
$T(\varphi)=0$ whenever $\supp\varphi\subset\]-\infty,0\[$,
\item\label{eq4.6} $\sum_{k=0}^mQ_k(\partial_1,\ldots,\partial_n)
T((-\partial_0)^k\varphi)
=\varphi(0)\delta$ for all $\varphi\in C^{\infty}_{[-b,b]}(\symR)$
where $\delta$ is the Dirac distribution on $\symR^n$
and $Q_k(\partial_1,\ldots,\partial_n),\,k=0,\ldots,m$, are PDOs on
$\symR^n$ such that $P(\partial_0,\partial_1,\ldots,\partial_n)
=\sum_{k=0}^m\partial_0^kQ_k (\partial_1,\ldots,\partial_n)$.
\enumer

In the subsequent lemmas it will be tacitly assumed that (ii) holds and $N$, $a$, $b$,
$\vartheta$, $T$ are fixed. 
Recall that $0<a<b<\infty$, $\vartheta\in\calD(\symR)$, $\vartheta=1$ on $[-b,b]$, $N\in\calO'_{\rm LOC}(H_{+})$ is a fundamental solution for $P(\partial_0,\partial_1,\ldots,\partial_n)$ and
$T(\varphi)=\vartheta_{0}N(\varphi\otimes\cdot)\in \calO'_{C}(\symR^{n})$ for every $\varphi\in\calD(\symR)$.
For every $\varphi\in\calD(\symR)$ denote by
$\widehat T(\varphi)$ the image of $T(\varphi)$ under the Fourier transformation on $\symR^{n}$. Then $\widehat T(\varphi)\in\calO_M(\symR^{n})$, by \eqref{eq4.4} and \eqref{eq1.3}.

\begin{lem}\label{lem4.1}
There are $p_0, m_0\in\symN_0$ and $C\in\]0,\infty\[$ such that
$$
|\widehat T(\varphi)(\xi)|\le C(1+|\xi|)^{m_0}\sup
\{|\partial_0^p\varphi(t)|:p=0,\ldots,p_0,\,a\le t\le b\}
$$
for every $\xi\in\symR^n$ and $\varphi\in C_{[a,b]}^\infty(\symR)$.
\end{lem}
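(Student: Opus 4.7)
The plan is to Fourier-transform the decomposition \eqref{eq4.2} of $T(\varphi)$ and bound each piece in $L^{1}(\symR^{n})$ using \eqref{eq4.3}; the sup over all of $\symR$ in that estimate will reduce to the sup over $[a,b]$ because $\varphi$ is supported in $[a,b]$.

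First I fix $k\in\symN_0$ large enough that $(1+|x|)^{-k}$ is integrable on $\symR^{n}$, say $k=n+1$. Applying the Fourier transform to \eqref{eq4.2} gives
\begin{equation*}
\widehat T(\varphi)(\xi)=\sum_{|\alpha|\le m_k}(i\xi_1)^{\alpha_1}\cdots(i\xi_n)^{\alpha_n}\,
\widehat{f_{k;\alpha;\varphi}}(\xi),
\end{equation*}
where each $f_{k;\alpha;\varphi}$ is continuous on $\symR^n$ (in particular, the Fourier transforms above are the ordinary ones once we know $f_{k;\alpha;\varphi}\in L^1$, which I verify next). By \eqref{eq4.3},
\begin{equation*}
\|f_{k;\alpha;\varphi}\|_{L^1(\symR^n)}\le D_k\bigl(\textstyle\int_{\symR^n}(1+|x|)^{-k}\,dx\bigr)
\sup\{|\partial_0^p\varphi(t)|:p\le m_k,\ t\in\symR\},
\end{equation*}
and the integral is finite by the choice of $k$. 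Hence
$|\widehat{f_{k;\alpha;\varphi}}(\xi)|\le\|f_{k;\alpha;\varphi}\|_{L^1(\symR^n)}$ is bounded by the same right-hand side.

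Putting the pieces together, $|\widehat T(\varphi)(\xi)|$ is dominated by
$\sum_{|\alpha|\le m_k}|\xi|^{|\alpha|}$ times the previous $L^1$-bound, and since there are only finitely many multiindices with $|\alpha|\le m_k$, we obtain a constant $C_0$ (depending only on $k$, $n$, and $\vartheta$) such that
\begin{equation*}
|\widehat T(\varphi)(\xi)|\le C_0(1+|\xi|)^{m_k}\sup\{|\partial_0^p\varphi(t)|:p\le m_k,\ t\in\symR\}
\end{equation*}
for every $\xi\in\symR^n$ and every $\varphi\in\calD(\symR)$.

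Finally, if $\varphi\in C^\infty_{[a,b]}(\symR)$, then $\supp\partial_0^p\varphi\subset[a,b]$ for every $p$, so the sup over $t\in\symR$ coincides with the sup over $t\in[a,b]$. Taking $m_0=p_0=m_k$ and $C=C_0$ yields the asserted estimate. There is no real obstacle here beyond choosing $k$ large enough to make $(1+|x|)^{-k}$ integrable on $\symR^n$; the whole argument is a direct consequence of the structure theorem \eqref{eq4.2}--\eqref{eq4.3} for $T(\varphi)$ combined with the trivial bound $\|\widehat f\|_\infty\le\|f\|_{L^1}$.
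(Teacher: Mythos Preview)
Your proof is correct and follows essentially the same approach as the paper's own proof: choose $k>n$ so that $(1+|x|)^{-k}\in L^{1}(\symR^{n})$, Fourier-transform the decomposition \eqref{eq4.2}, bound $\|f_{k;\alpha;\varphi}\|_{L^{1}}$ via \eqref{eq4.3}, and use $\|\widehat f\|_{\infty}\le\|f\|_{L^{1}}$. The only cosmetic difference is that you first derive the estimate for all $\varphi\in\calD(\symR)$ and then restrict, whereas the paper works directly with $\varphi\in C^{\infty}_{[a,b]}(\symR)$.
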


\begin{proof} If in \eqref{eq4.1} we take $k> n$, then, by \eqref{eq4.2} and \eqref{eq4.3},
$$ 
T(\varphi)=\sum_{|\alpha|\le m_k} \partial_1^{\alpha_1}\cdots\partial_n^{\alpha_n}f_{k;\alpha;\varphi}
\ \quad\mbox{for every }\varphi\in C_{[a,b]}^\infty(\symR)
$$
where
$$
\|f_{k;\alpha;\varphi}\|_{L^1(\symR^{n})}\le D \sup\{|\partial_0^p\varphi(t)|:p=0,\ldots,m_k,\, a\le t\le b\}
$$
for every $\alpha$ with $|\alpha|\le m_k$ and every $\varphi\in
C_{[a,b]}^\infty(\symR)$, with $D\in\]0,\infty\[$ depending only on $k$. Consequently, whenever $\varphi\in
C_{[a,b]}^\infty(\symR)$, then
$$
|\widehat T(\varphi)(\xi)|\le(1+|\xi|)^{m_k}|g_{\varphi}(\xi)|_{M_{m\times m}} \ \quad\mbox{for every }\xi\in\symR^n
$$ 
where $g_\varphi\in C_b(\symR^{n})$ and
$$
\sup_{\xi\in\symR^{n}}|g_{\varphi}(\xi)|\le C\sup\{|\partial_0^p\varphi(t)|:
p=0,\ldots,m_k,\, a\le t\le b\}
$$
for some $C\in\]0,\infty\[$ depending only on $k$. 
\end{proof}

\subsection{An inequality of Chazarain type}\label{subsec4.2}

\begin{lem}\label{lem4.2}
Suppose that $P(\partial_0,\partial_1,\ldots,\partial_n)$ is a PDO on 
$\symR^{1+n}$ with constant coefficients for which there is a fundamental solution belonging to $\calO'_{\rm LOC}(H_+)$. Then there are $a,b\in
\]0,\infty\[$
such that whenever $(\lambda,\xi)\in\symC\times\symR^n$ and
$$
\hRe\lambda> a+b\log(1+|\lambda|+|\xi|)~{^{*)}},
$$\footnote{$^{*)}$ This inequality and its proof are similar to the inequality (1.2) 
on p.~394 of \cite{1} and the argument presented
on
p.~395 of \cite{1}. There is however an important difference. In \cite{1} the inequality (1.2) does not involve $\xi$ and determines the ``logarithmic region'' $\Lambda\subset\symC$ such that for every $\lambda\in\Lambda$ an abstract operator $Q(\lambda)=\lambda^m A_m+\cdots+\lambda A_1+A_0$
is invertible. In our case the inequality involves $\xi$ but the operator $Q(\lambda)$
is replaced by the \emph{polynomial} $P(\lambda,i\xi)$, and Lemma~\ref{lem4.2} is not the final step of the argument.}
then  $P(\lambda,i\xi)\ne0$.
\end{lem}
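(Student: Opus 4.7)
My plan is to probe \eqref{eq4.6} with the family of test functions $\varphi_\lambda(t) = \chi(t) e^{-\lambda t}$ parametrized by $\lambda \in \symC$, where $\chi \in \calD(\symR)$ is fixed with $\chi \equiv 1$ on $[-a, a]$ and $\supp \chi \subset [-b, b]$ (so $\chi$ is tailored to the $a, b$ already fixed in Section~\ref{sec4}). Since $\varphi_\lambda \in C^\infty_{[-b, b]}(\symR)$ and $\varphi_\lambda(0) = \chi(0) = 1$, applying \eqref{eq4.6} and then taking the Fourier transform in $x \in \symR^n$ yields
\begin{equation*}
\sum_{k=0}^{m} Q_k(i\xi)\, \widehat{T}\bigl((-\partial_0)^k \varphi_\lambda\bigr)(\xi) = 1 \qquad \text{for every } \xi \in \symR^n.
\end{equation*}

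Using the Leibniz formula $(-\partial_0)^k(\chi e^{-\lambda \cdot}) = \sum_{j=0}^{k}\binom{k}{j}(-1)^{k-j} \chi^{(k-j)}\, \lambda^j e^{-\lambda \cdot}$, I split each term into the diagonal contribution ($j=k$) and the off-diagonal remainder ($j<k$). The diagonal contributions, summed over $k$, produce exactly $P(\lambda, i\xi)\cdot \widehat{T}(\chi e^{-\lambda \cdot})(\xi)$ via the identity $P(\lambda, i\xi) = \sum_k \lambda^k Q_k(i\xi)$. The remainder $R(\lambda, \xi)$ is a linear combination of terms $\lambda^j Q_k(i\xi)\, \widehat{T}(\chi^{(r)} e^{-\lambda \cdot})(\xi)$ with $r := k-j \ge 1$. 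Each $\chi^{(r)}$ for $r\ge 1$ vanishes on $[-a, a]$ and is supported in $[-b, -a] \cup [a, b]$, so I would multiply by a one-sided smooth cutoff $\tau \in C^\infty(\symR)$ with $\tau \equiv 0$ on $(-\infty, 0]$ and $\tau \equiv 1$ on $[a, \infty)$: the piece $(1-\tau)\chi^{(r)} e^{-\lambda \cdot}$ has support in $[-b, -a] \subset (-\infty, 0)$ and is annihilated by \eqref{eq4.5}, while $\tau \chi^{(r)} e^{-\lambda \cdot}$ lies in $C^\infty_{[a, b]}(\symR)$, making it admissible for Lemma~\ref{lem4.1}.

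Lemma~\ref{lem4.1} bounds $|\widehat{T}(\tau \chi^{(r)} e^{-\lambda \cdot})(\xi)|$ by $C(1+|\xi|)^{m_0}$ times the supremum over $t \in [a, b]$ of the derivatives of order $\le p_0$ of $\tau(t)\chi^{(r)}(t) e^{-\lambda t}$; since $|e^{-\lambda t}| \le e^{-a \hRe \lambda}$ for $t \ge a$ (we only need $\hRe \lambda \ge 0$), this supremum is at most $C_1(1+|\lambda|)^{p_0} e^{-a \hRe \lambda}$. Collecting the $Q_k(i\xi)$ and $\lambda^j$ factors gives
\begin{equation*}
|R(\lambda, \xi)| \le C_0 (1 + |\lambda| + |\xi|)^{M_0} e^{-a \hRe \lambda}
\end{equation*}
with constants $C_0, M_0$ depending only on $P$, $\chi$, and $\tau$. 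If now $P(\lambda, i\xi) = 0$, the diagonal term vanishes and the displayed identity forces $R(\lambda, \xi) = 1$; taking logarithms gives $a\hRe \lambda \le \log C_0 + M_0 \log(1 + |\lambda| + |\xi|)$, and the contrapositive, with the existentially quantified constants of the lemma chosen to exceed $a^{-1}\log C_0$ and equal $M_0/a$ respectively, is the desired inequality. The main subtlety is the one-sided cutoff $\tau$: without it, $\chi^{(r)} e^{-\lambda \cdot}$ is not of the form required by Lemma~\ref{lem4.1}; it is only after removing the ``negative'' half of $\chi^{(r)}$ via \eqref{eq4.5} that the surviving piece lives in $C^\infty_{[a, b]}(\symR)$ and the exponentially small factor $e^{-a \hRe \lambda}$ emerges.
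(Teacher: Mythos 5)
Your proof is correct and follows the same route as the paper's: the test functions $\chi\,e^{-\lambda t}$ are the paper's $e_{-\lambda}\varphi_0$, the Leibniz split separates the diagonal contribution $P(\lambda,i\xi)\,\widehat T(\chi e^{-\lambda\cdot})(\xi)$ from a remainder whose left-supported part is killed by \eqref{eq4.5} and whose right-supported part is estimated via Lemma~\ref{lem4.1}, giving $a\hRe\lambda\le\log C_0+M_0\log(1+|\lambda|+|\xi|)$ whenever $P(\lambda,i\xi)=0$ (with $\hRe\lambda\ge0$). Your explicit one-sided cutoff $\tau$ is precisely what the paper encodes implicitly when it defines $\psi_{k,\lambda}\in C^\infty_{[a,b]}(\symR)$ by the Leibniz remainder restricted to $[a,b]$.
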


\begin{proof} From \eqref{eq4.6} it follows that
\begin{equation}\label{eq4.7}
\sum_{k=0}^m Q_k(i\xi)\widehat T((-\partial_0)^k\varphi)=\varphi(0)
\end{equation}
for every $\varphi\in C^\infty_{[-b,b]}(\symR)$
and $\xi\in\symR^n$.
Take $\varphi_0\in C^\infty_{[-b,b]}(\symR)$ such that $\varphi_0=1$ on $[-a,a]$. 
Following J.~Chazarain~\cite{1}, pp.~394--395, consider functions of the form
$\varphi=e_{-\lambda}\varphi_0$ where $\lambda$ ranges over $\symC$. 
Since $T(\varphi)=0$ whenever $\supp\varphi\subset\]-\infty,0\[$,
by
\eqref{eq4.7} and the Leibniz formula
one has 
\begin{align}\label{eq4.8}	
 P(\lambda,i\xi)[\widehat T(e_{-\lambda}\varphi_0)](\xi)
&=\Big(\sum_{k=0}^m \lambda^k Q_k(i\xi)\Big)[\widehat T(e_{-\lambda}\varphi_0)](\xi)\\
&=1-\sum_{k=0}^m Q_k(i\xi)[\widehat T(\psi_{k,\lambda})](\xi)\notag
\end{align}                      
where $\psi_{k,\lambda}\in C_{[a,b]}^\infty(\symR)$  is determined by the equality
$$
\psi_{k,\lambda}(t)=(-1)^k\sum_{j=1}^k{k\choose j}\partial_0^j
\varphi_0(t)(-\lambda)^{k-j} e^{-\lambda t}\ \quad\mbox{for }
t\in[a,b].
$$
By Lemma \ref{lem4.1} there are $C,K\in\]0,\infty\[$ such that, if 
$\hRe\lambda\ge 0$, then
\begin{multline}\label{eq4.9}	
|[\widehat T(\psi_{k,\lambda})](\xi)|\\
\begin{aligned}
&\le C(1+|\xi|)^{m_0}\sup\{|\partial_0^p\psi_{k,\lambda}(t)|:
p=0,\ldots,p_0,\,a\le t\le b\}\\
&\le C(1+|\xi|)^{m_0} K(1+|\lambda|)^{m-1+p_0} e^{-a\hRe \lambda}
\end{aligned} \indent
\end {multline}      
for every $k=1,\ldots,m$ and $\xi\in\symR^n$. Furthermore, there are
$l\in\symN$ and $L\in\]0,\infty\[$ such that
\begin{equation}\label{eq4.10}
\sum_{k=0}^m|Q_k(i\xi)|\le L(1+|\xi|)^l \ \quad\mbox{for every }
\xi\in\symR^n.
\end{equation}
Let $\mu=m_0+l+m-1+p_0$. From \eqref{eq4.8}--\eqref{eq4.10} it follows that if  $(\lambda,\xi)\in\symC\times \symR^n$, $\hRe\lambda\ge0$, and
$$
CKL(1+|\lambda|+|\xi|)^\mu e^{-a\hRe\lambda}<1,
$$
then $|\sum_{k=0}^m Q_k(i\xi)[\widehat T(\psi_{k,\lambda})](\xi)|<1$, and hence $P(\lambda,i\xi)\ne0$. Therefore, if $(\lambda,\xi)\in \symC\times \symR^n$ and
$$
\hRe\lambda>a^{-1}\log(CKL+1)+a^{-1}\mu\log(1+|\lambda|+|\xi|),
$$
then $P(\lambda,i\xi)\ne0$.
\end{proof}

\subsection{The  Chazarain type inequality implies (i)}\label{subsec4.3}
The implication (ii)$\Rightarrow$(i) is an immediate consequence of Lemma \ref{lem4.2} and the following

\begin{lem}\label{lem4.3}
Let $Q$ be a polynomial of $1+n$ variables with complex coefficients. Suppose that there are $a\in\symR$ and $b\in\]0,\infty\[$ such that
\begin{multline}\label{eq4.11}
\hRe\lambda\le a+b\log(1+|\lambda|+|\xi|)\\
\mbox{whenever }
(\lambda,\xi)\in \symC\times\symR^n\ \mbox{and}\ Q(\lambda,\xi)=0.
\end{multline}
Then
$$
\sup\{\hRe\lambda:(\lambda,\xi)\in \symC\times\symR^n,\,Q(\lambda,\xi)=0\}<\infty.
$$
\end{lem}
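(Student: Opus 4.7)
The plan is to exploit the fact that the zero set $\{Q=0\}$ is an algebraic (hence semi-algebraic) subset of $\symR^{2+n}$ after identifying $\symC$ with $\symR^{2}$, combined with the Tarski--Seidenberg principle that suprema of bounded semi-algebraic sets produce semi-algebraic functions, and that semi-algebraic functions of one real variable grow at most polynomially. The logarithmic hypothesis \eqref{eq4.11} will be incompatible with any positive polynomial growth rate, so it must hold with growth rate zero, i.e.\ boundedness.

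First I would reduce to a function of one real variable. For each $t\ge 0$ set
$$
g(t)=\sup\{\hRe\lambda:(\lambda,\xi)\in\symC\times\symR^{n},\ Q(\lambda,\xi)=0,\ |\lambda|^{2}+|\xi|^{2}\le t^{2}\},
$$
with the convention $\sup\emptyset=-\infty$. For each fixed $t$ the constraint set is compact and semi-algebraic and $\hRe\lambda$ is continuous, so the sup is attained whenever the constraint set is non-empty; by the Tarski--Seidenberg theorem in the same form invoked in Section~2.1, $g$ is a semi-algebraic function of $t$ on the semi-algebraic subset of the half-line where $g(t)$ is finite. Moreover $g$ is non-decreasing, and
$$
\sup\{\hRe\lambda:(\lambda,\xi)\in\symC\times\symR^{n},\ Q(\lambda,\xi)=0\}=\lim_{t\to\infty}g(t),
$$
so it is enough to show that $g$ is bounded above. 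The hypothesis \eqref{eq4.11} applied to any admissible $(\lambda,\xi)$ gives $|\lambda|+|\xi|\le\sqrt{2}\,t$, whence
$$
g(t)\le a+b\log(1+\sqrt{2}\,t).
$$

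Now I would invoke the Puiseux-type classification at infinity for one-variable semi-algebraic functions: a semi-algebraic function $g$ defined for all sufficiently large $t$ satisfies $g(t)=c t^{\alpha}(1+o(1))$ as $t\to\infty$ for some $c\in\symR$ and some rational $\alpha$ (this is a standard consequence of Tarski--Seidenberg, in the same circle of ideas as the inequalities of \cite{12} and~\cite{4} cited in Section~2.1). A positive exponent $\alpha>0$ with $c\ne 0$ would yield polynomial growth of $g(t)$, contradicting the logarithmic upper bound of the previous paragraph. Hence $\alpha\le 0$ or $c=0$, and in every case $\limsup_{t\to\infty}g(t)<\infty$, which is the conclusion of the lemma.

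The only non-routine ingredient is the Puiseux/polynomial growth classification for semi-algebraic functions of one real variable, which is what I expect to be the main obstacle to write out carefully if one does not wish to quote it. Everything else is bookkeeping: translating the lemma into a statement about the boundedness of a single semi-algebraic function of one real variable, and then matching it against the logarithmic bound supplied by \eqref{eq4.11}.
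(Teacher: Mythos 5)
Your proposal takes essentially the same route as the paper's Appendix: reduce to a one-variable supremum function $\sigma(r)$, use the Tarski--Seidenberg theorem to see that it is semi-algebraic (the paper extracts a nonzero polynomial $V$ with $V(r,\sigma(r))=0$), and invoke the Puiseux expansion at infinity to get a rational-power asymptotic $c\,r^{-p_k/d_k}(1+o(1))$ that is incompatible with the logarithmic bound unless it stays bounded. The only difference is that you quote the growth classification for one-variable semi-algebraic functions as a known black box, whereas the paper, following G{\aa}rding, carries out the Puiseux factorization of $V$ explicitly.
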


The proof follows the scheme due to L.~G{\aa}rding and L.~H\"ormander. Let 
\begin{multline*}
\sigma(r)=\sup\{\hRe\lambda:\lambda \in \symC\mbox{ and there is  }\xi\in\symR^n
\mbox{ such that} \\
|\lambda^2|+|\xi^2|\le\tfrac12 r^2\mbox{ and }Q(\lambda,\xi)=0\}.
\end{multline*}
Then, by \eqref{eq4.11},
\begin{equation}\label{eq4.12}
\sigma(r)\le a+b\log(1+r)\ \quad\mbox{for every }r\in[0,\infty\[.
\end{equation}
Following an idea of L.~H\"ormander (presented in the Appendix to \cite{6}), the Tarski--Seidenberg theorem is used to show that there is a polynomial $V(z,w)$ (not vanishing identically)
of two variables such that $V(r,\sigma(r))=0$ for every $r\in[0,\infty\[$. Then, as in L.~G{\aa}rding's proof of the Lemma on p.~11 of \cite{3}, the Puiseux expansions of the
$w$-roots of $V(z,w)$ for large $|z|$ show that \eqref{eq4.12} is possible only if 
$\sup\{\sigma(r):r\in[0,\infty\[\}<\infty$.



\newpage
\section*{Appendix}

\setcounter{equation}{0}
\renewcommand\theequation{A.{\arabic{equation}}}

\subsection*{Proof of Lemma 4.3}

Let $R(\sigma,\tau,\xi)$ and  $S(\sigma,\tau,\xi)$ be real polynomials on $\symR^{2+n}$ such that
$$
R(\sigma,\tau,\xi)+iS(\sigma,\tau,\xi)=Q(\sigma+i\tau,\xi).
$$
Then
\begin{multline*}
E=\{(r,\sigma,\tau,\xi)\in\symR^{3+n}:r\ge0,\,\sigma^2+\tau^2+|\xi|^2
\le\tfrac12 r^{2},\\
R(\sigma,\tau,\xi)=0,\,
S(\sigma,\tau,\xi)=0\}
\end{multline*}
is a semi-algebraic subset of $\symR^{3+n}$ and, by the Tarski--Seidenberg theorem (see Appendix to [H2]) its projection  on $\symR^{2}$ defined by
$$
F=\{(r,\sigma)\in\symR^{2}:\exists_{\tau,\xi}\ (r,\sigma,\tau,\xi)\in E\}
$$
is a semi-algebraic subset of  $\symR^{2}$. 
If $\sigma(r)$ is defined as in Sec.~4.3, then for
 every $r\in[0,\infty\[$ one has
\begin{equation}\label{eqA.1}
\sigma(r)=\sup\{\sigma:(r,\sigma)\in F\}.
\end{equation}
Since $F$ is semi-algebraic, it may be represented in the form
\begin{equation}\label{eqA.2}
F=\bigcup_{i=1}^k F_i\cap G_{i,1}\cap\cdots\cap G_{i,j(i)} 
\end{equation}	
where
\begin{equation}\label{eqA.3}
F_i=\{x\in\symR^{2}: P_i(x)=0\},\ \quad  G_{i,j}=\{x\in\symR^{2}: Q_{i,j}(x)>0\},
\end{equation}	
$P_i$ and $Q_{i,j}$ being real polynomials on $\symR^{2}$. It is not excluded that some $P_i$
are identically zero and some $Q_{i,j}$ are strictly positive on the whole $\symR^{2}$. 
From \eqref{eqA.1} it follows that whenever $r\in[0,\infty\[$ is fixed, there is $i(r)\in\{1,\ldots,k\}$ such that	
\begin{equation}\label{eqA.4}
\sigma(r)=\sup\{\sigma:(r,\sigma)\in F_{i(r)}\cap G_{i(r),1}\cap\cdots\cap G_{i(r),j(i(r))} \}.
\end{equation}
By \eqref{eq4.12}, for every $r\in[0,\infty\[$ one has $\sigma(r)<\infty$, so that there is a bounded sequence
$(\sigma_\nu(r))_{\nu=1}^{\infty}$ such that
\begin{equation}\label{eqA.5}
(r,\sigma_\nu(r))\in F_{i(r)}\cap G_{i(r),1}\cap\cdots\cap G_{i(r),j(i(r))}\ \quad
\mbox{for every }\nu=1,2,\ldots
\end{equation} 
and
\begin{equation}\label{eqA.6}
\lim_{\nu\to\infty}\sigma_\nu(r)=\sigma(r).
\end{equation}
If $P_{i(r)}\not\equiv 0$, then \eqref{eqA.5} and \eqref{eqA.6} imply that $P_{i(r)}(r,\sigma(r))=0$. If $P_{i(r)}\equiv\nobreak 0$, then, again by \eqref{eqA.5} and \eqref{eqA.6}, for some $j_0\in\{1,\ldots,j(i(r))\}$ one has $Q_{i(r),j_0}\not\equiv 0$ and 
$Q_{i(r),j_0}(r,\sigma(r))=0$, because otherwise $F_{i(r)}=\symR^2$ and there would be
$\varepsilon>0$ such that $Q_{i(r),j}(r,\sigma(r)+\varepsilon)>0$ for every 
$j\in\{1,\ldots,j(i(r))\}$ contrary to \eqref{eqA.4}. Consequently, whenever 
$r\in[0,\infty\[$, then either $W_r\equiv P_{i(r)}$ or $W_r\equiv Q_{i(r),j_0}$ is a real polynomial on $\symR^2$ such that
$$
W_r\not\equiv 0\quad\mbox{and}\quad  W_r(r,\sigma(r))=0.
$$
Therefore if $V$  is equal to the product of all those polynomials $P_{i}$ and $Q_{i,j}$, that occur in 
\eqref{eqA.3} and do not vanish identically on $\symR^2$, then
\begin{equation}\label{eqA.7}
V \not\equiv 0\quad\mbox{and}\quad V(r,\sigma(r))=0\quad\mbox{for every }
r\in[0,\infty\[.
\end{equation}

Now we are going to show that \eqref{eq4.12} and \eqref{eqA.7} imply
$\sup\{\sigma(r):r\in[0,\infty\[\}<\infty$. To this end we consider $V$ as a polynomial $V(z,w)$ of two complex veriables, and, following L.~G\aa rding~[G, proof of the Lemma on p.~11], we use the Puiseux expansions of the $w$-roots of $V(z,w)$. Concerning these expansions we will give exact references to \cite{S-Z}. Consider the factorization
$$
V(z,w)=V_1(z,w)\cdot V_2(z,w)\cdot\ldots\cdot V_l(z,w),
\quad z\in\symC\setminus\bigcup_{k=1}^l S_k,
\quad x\in\symC,
$$
where
\begin{itemize}
\item[(i)] every $V_k$, $k=1,\ldots,l$, belongs to the ring $K(z)[w]$ of polynomials of $w$
over the field $K(z)$ of rational functions of $z$, so that
$$
V_k(z,w)=\sum_{j=0}^{d_k}A_{k,j}(z)w^j\ \quad
\mbox{for every }z\in\symC\setminus S_k\
\mbox{and}\ x\in\symC
$$
where $A_{k,j}\in K(z)$ for $j=0,\ldots, d_k$, $A_{k,d_k}\not\equiv 0$, and the finite set $S_k$  consists of those points of $\symC$
 at which some $A_{k,j}$,  $j=0,\ldots, d_k$, has a pole,
\item[(ii)] every $V_k$, $k=1,\ldots,l$, is an irreducible element of $K(z)[w]$.
\end{itemize}
The assumption that $A_{k,d_k}\not\equiv 0$ implies that all the sets
$$
N_k=\{z\in\symC\setminus S_k:A_{k,d_k}(z)=0\},\ \quad
k=1,\ldots,l,
$$
are finite. Define
\begin{align*}
M_k&=\{z\in\symC\setminus (S_k\cup N_k):\mbox{not all the $w$-roots of
$V_k(z,w)$ are simple}\},\\
\calN_{k}&=\{(z,w)\in(\symC\setminus (S_k\cup N_k\cup M_k))\times \symC:V_k(z,w)=0\}.
\end {align*}
From Theorems VI.13.7, VI.14.2 and VI.14.3 of \cite{S-Z} it follows that
\begin{itemize}
\item[(a)] for every $k=1,\ldots,l$ the set $M_k$ is finite and
$$
\calN_k\cap[(\symC\setminus (S_k\cup N_k\cup M_k))\times \symC]
$$	                                  	
is equal to the graph of a $d_k$-variate function $\calR_k$ analytic on the set
$\symC\setminus (S_k\cup N_k\cup M_k)$,
\item[(b)] there is $R\in\]0,\infty\[$ such that for every $k=1,\ldots,l$ one has
$\{z\in\symC:R<|z|<\infty\}\subset\symC\setminus (S_k\cup N_k\cup M_k)$, 
and if $z\in\symC$ and $R<|z|<\infty$, then
$$
\calR_k(z)=\{\phi_k(\zeta):\zeta\in\symC,\,
0<|\zeta|< R^{-1/d_k},\,
\zeta^{d_k}=z^{-1}\}	                
$$
where $\phi_k$ is a function of one complex variable holomorphic in the annulus
$
\{\zeta\in\symC:0<|\zeta|< R^{-1/d_k}\},
$
\item[(c)] every $\phi_k$, $k=1,\ldots,l$, has at zero either a removable singularity or a pole.
\end{itemize}
Consequently, for every $k=1,\ldots,l$ one has
\begin{multline}\label{eqA.8}
\calN_k\cap (\{z\in\symC:|z|>R\}\times \symC)\\
=\Big\{\Big(z,\sum_{p=p_k}^{\infty}a_{k,p}\zeta^p\Big):(z,\zeta)\in\symC^2,\,
|z|>R,\, \zeta^{d_k}=z^{-1}\Big\}
\end{multline}	
where $\sum_{p=p_k}^\infty a_{k,p}\zeta^p$ is the Laurent expansion of $\phi_k$  in the annulus
$\{\zeta\in\symC:0<|\zeta|<R^{-1/d_k}\}$.
We assume that either $a_{k,p_k}\ne 0$ or $0=a_{k,p_k}=a_{k,p_k+1}=
\cdots.$ The equality \eqref{eqA.8} is nothing but the exact form of the Puiseux series expansion of $\calR_k(z)$ for  $z\to\infty$.
It follows that if $r\in\]R,\infty\[$, then $(r,\sigma(r))\subset \bigcup_{k=1}^l\calN_{k}$  and $\sigma(r)$ is equal to one of the numbers
$$
\sigma_{k,d}(r)=\sum_{p=p_k}^\infty a_{k,p}\bigg(\frac{e^{i2\pi d/d_k}}{\sqrt[\uproot3 {d_k}]{r}}\bigg)^p,\ \quad k=1,\ldots,l,\ d=1,\ldots,d_{k},
$$
where $\sqrt[\uproot3 {d_k}]{r_k}$ is the positive $d_k$-th root of $r$ and the series is absolutely convergent, so that $\sigma_{k,d}(r)=c_{k,d}r^{-p_k/d_k}(1+o(1))$
as $r\to\infty$ where $c_{k,d}= a_{k,p_{k}}e^{i2\pi d p_{k}/d_k}$. If for some $k=1,\ldots,l$
and $d=1,\ldots,d_k$ the set
$$
\{r\in\]R,\infty\[:\sigma(r)=\sigma_{k,d}(r)\}
$$
is unbounded, then $c_{k,d}$ must be real, and, by the estimation \eqref{eq4.12} of $\sigma(r)$, either  $c_{k,d}\le 0$, or $c_{k,d}>0$ and $p_k\ge 0$. In both cases
$\sup\{\sigma_{k,d}(r):r\in]R,\infty\[\}<\infty$. This implies that
$\sup\{\sigma(r):r\in[0,\infty\[\}<\infty$, completing the proof.

\end{document}